\numberwithin{equation}{subsection}
\newtheorem{theorem}[equation]{Theorem}
\newtheorem{proposition}[equation]{Proposition}
\theoremstyle{definition}
\newtheorem{rmk}[equation]{Remark}
\newenvironment{remark}[1][]{\begin{rmk}[#1] \pushQED{\qed}}{\popQED \end{rmk}}
\newtheorem{eg}[equation]{Example}
\newenvironment{example}[1][]{\begin{eg}[#1] \pushQED{\qed}}{\popQED \end{eg}}
\newtheorem{defn}[equation]{Definition}
\renewcommand{\thesubsection}{%
  \ifnum\c@subsection<1 \@arabic\c@section
  \else \thesection.\@arabic\c@subsection
  \fi
}
\newcommand{\bC}{\mathbf{C}}
\newcommand{\cE}{\mathcal{E}}
\newcommand{\bF}{\mathbf{F}}
\newcommand{\bO}{\mathbf{O}}
\newcommand{\cQ}{\mathcal{Q}}
\newcommand{\cR}{\mathcal{R}}
\newcommand{\bS}{\mathbf{S}}
\newcommand{\fS}{\mathfrak{S}}
\newcommand{\bZ}{\mathbf{Z}}
\newcommand{\fg}{\mathfrak{g}}
\renewcommand{\phi}{\varphi}
\newcommand{\ol}[1]{\overline{#1}}
\newcommand{\arxiv}[1]{\href{http://arxiv.org/abs/#1}{{\tt arXiv:#1}}}
\def\Ddots{\mathinner{\mkern1mu\raise\p@
\vbox{\kern7\p@\hbox{.}}\mkern2mu
\raise4\p@\hbox{.}\mkern2mu\raise7\p@\hbox{.}\mkern1mu}}
\renewcommand{\hom}{\operatorname{Hom}}
\DeclareMathOperator{\trace}{Tr}
\DeclareMathOperator{\End}{End}
\DeclareMathOperator{\Sym}{Sym}
\DeclareMathOperator{\Spec}{Spec}
\DeclareMathOperator{\sgn}{sgn}
\newcommand{\GL}{\mathbf{GL}}
\newcommand{\SO}{\mathbf{SO}}
\newcommand{\Gr}{\mathbf{Gr}}
\newcommand{\fgl}{\mathfrak{gl}}
\newcommand{\fso}{\mathfrak{so}}
\newcommand{\fsp}{\mathfrak{sp}}
\newcommand{\fosp}{\mathfrak{osp}}
\title[Jacobi--Trudi formulas and determinantal varieties]{Jacobi--Trudi formulas and\\ determinantal varieties}
\author{Steven V Sam}
\address{Department of Mathematics, University of California, San Diego, CA USA}
\email{ssam@ucsd.edu}
\thanks{SS was supported by NSF DMS-1849173.}
\author{Jerzy Weyman}
\address{Department of Mathematics, Jagiellonian University, Krak\'ow, Poland}
\email{jerzy.weyman@gmail.com}
\thanks{JW was supported by MAESTRO NCN-UMO-2019/34/A/ST1/00263 and NAWA POWROTy -PPN/PPo/2018/1/00013/U/00001 grants, as well as by NSF DMS-1802067.}
\date{June 10, 2022}
\begin{document}

\maketitle

\begin{abstract}
  Gessel gave a determinantal expression for certain sums of Schur functions which visually looks like the classical Jacobi--Trudi formula. We explain the commonality of these formulas using a construction of Zelevinsky involving BGG complexes and use this explanation to generalize this formula in a few different directions.
\end{abstract}

\section{Introduction}

In this paper we attempt to understand and generalize some results of Gessel \cite{gessel} which bear some visual similarities to the classical Jacobi--Trudi formulas in symmetric function theory. First, we recall the statement: given an integer partition $\lambda$ with at most $k$ parts, we have the determinantal formula for the Schur function
\[
  s_\lambda = \det( h_{\lambda_i - i + j} )_{i,j=1,\dots,k}
\]
where $h$ denotes the complete homogeneous symmetric function. The representation-theoretic significance of this formula is that it supplies a recipe for constructing the Schur functor from tensor products of symmetric power functors. Namely, the expansion of the above determinant has a natural interpretation as the Euler characteristic of an acyclic chain complex. Gessel's formula replaces $s_\lambda$ with the sum
\[
  \sum_{\substack{\lambda\\ \ell(\lambda)\le k}} s_\lambda(x) s_\lambda(y)
\]
in two sets of variables $x$ and $y$, and replaces $h_n$ with the sum
\[
  H_n = \sum_{d \ge 0} h_d(x) h_{d+n}(y).
\]
The significance of this formula for us, and the starting point of this paper, is that the first expression is the character of the coordinate ring of the determinantal variety of generic matrices of rank $\le k$, while the $H_n$ are characters of certain equivariant modules supported on the variety of rank $\le 1$ matrices. Naturally, we want to interpret this formula as a recipe for constructing the variety of rank $\le k$ matrices from these more basic modules.

In this article, we reprove this formula using representation theory, and in particular, interpret it as the Euler characteristic of an acyclic chain complex. This chain complex is quite interesting and it would be worthwhile to further investigate them from a geometric perspective. Moreover, from our approach we deduce several similar formulas: one for the character of the coordinate ring of determinantal variety of skew-symmetric matrices of rank $\le 2k$, one for a ring very closely related to the coordinate ring of the determinantal of symmetric matrices of rank $\le k$, and a companion formula for the symmetric case involving spinor representations.

Our approach involves an old construction of Zelevinsky \cite{zelevinsky} involving BGG complexes. He used this method to construct an acyclic complex of $\GL_n$-representations whose Euler characteristic gives the Jacobi--Trudi formula for Schur polynomials. Zelevinsky's approach takes as input a representation $V$ of a semisimple Lie algebra $\fg$ and a weight $\lambda$, and outputs an acyclic complex whose terms are certain weight spaces of $V$ and which resolves the space of highest weight vectors of weight $\lambda$ in $V$. If $V$ carries an action of another algebra $H$ which commutes with $\fg$, then the resulting complex is also compatible with the $H$-action.

We apply Zelevinsky's result to various infinite-dimensional representations arising from Howe dual pairs. In many of the cases the weight $\lambda$ we use is the trivial weight. However, we will work in the context of a general weight since the formulas are quite similar, and hence we will get quite a vast generalization of the original formula.

The paper is organized as follows. In Section~\ref{sec:zelevinsky} we recall Zelevinsky's formula. In Section~\ref{sec:generic-mat} we deal with generic matrices. To get Gessel's result on the coordinate ring of determinantal varieties we apply Zelevinsky's formula to the space
\[
  V = \Sym (E\otimes U^*)\otimes \Sym (U\otimes F^*)
\]
where $\dim(U)=k$ and $\fg=\fgl (U)$. The commuting action is the action of $H=\fgl (E)\times \fgl (F)$.

In Sections~\ref{sec:skew-sym} and \ref{sec:sym} we deal with skew-symmetric and symmetric matrices, respectively. We use the space
\[
  V = \Sym(E\otimes U)
\]
where $U$ is equipped with either a symplectic or orthogonal form. The original action is that of $\fg=\fsp (U)$ or $\fg = \fso(U)$ and the commuting action is that of $H =\fgl(E)$.

Finally in Section~\ref{sec:spinor} we apply Zelevinsky's result to the space
\[
  V = \Sym(E\otimes U)\otimes\Delta
\]
where again $U$ is equipped with an orthogonal form, and $\Delta$ is the spinor representation. This allows us to deduce determinantal formulas for the sums
\[
  \sum_{\substack{\lambda\\ \ell(\lambda) \le k}} s_\lambda
\]
which recover some formulas from \cite{gessel}.

For all of the above results, one can use exterior algebras in place of symmetric algebras. However, this does not give anything essentially new because of the existence of the involution $\omega$ on symmetric functions that sends $s_\lambda$ to its transpose $s_{\lambda^\dagger}$. We briefly remark on this in Remark~\ref{rmk:exterior} in the first case and don't discuss it any further.

\subsection*{Notation}

We use $\Sym^d$ to denote the $d$th symmetric power functor and $\Sym = \bigoplus_{d \ge 0} \Sym^d$ to denote the symmetric algebra construction. Similarly, $\bigwedge^d$ is the $d$th exterior power and $\bigwedge = \bigoplus_{d \ge 0} \bigwedge^d$ is the exterior algebra construction.

For symmetric function notation, we follow \cite[Chapter 7]{stanley} except that the transpose of a partition is denoted with $\dagger$ rather than a prime.

\subsection*{Related work}

Determinantal expressions for variations of these sums obtained by restricting representations of orthogonal and symplectic Lie algebras to the general linear Lie algebra can be obtained from \cite{krattenthaler}. Jacobi--Trudi formulas can also be used to give formulas for minimal affinizations in the study of representations of loop algebras, see \cite{minaff} and the references there.

\subsection*{Acknowledgements}

We thank Christian Krattenthaler and Claudiu Raicu for helpful discussions.

\section{The setup} \label{sec:zelevinsky}

Let $\fg$ be a reductive complex Lie algebra. We will assume that we have fixed the data of a Cartan subalgebra and set of positive roots. Let $W$ be its Weyl group and let $\rho$ be $\frac12$ times the sum of all of the positive roots. Let $V$ be a locally finite  $\fg$-representation (i.e., $V$ is isomorphic to a direct sum of finite-dimensional $\fg$-representations). Given a dominant weight $\lambda$, let $V[\lambda]$ be the space of highest weight vectors of weight $\lambda$ in $V$ and given any weight $\chi$, let $V_\chi$ be the $\chi$-weight space.

The following theorem of Zelevinsky \cite{zelevinsky} is crucial for this article:

\begin{theorem}[Zelevinsky]
  There is a (finite) exact sequence
  \[
    \cdots \to  \bF_1 \to \bF_0 \to V[\lambda] \to 0
  \]
  where
  \[
    \bF_i = \bigoplus_{\substack{w \in W\\ \ell(w)=i}} V_{\lambda+\rho - w^{-1}(\rho)}.
  \]
\end{theorem}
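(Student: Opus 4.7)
My plan has three stages. First, use local finiteness to reduce to the case $V = L(\mu)$: the canonical decomposition $V \cong \bigoplus_{\mu \text{ dominant}} L(\mu) \otimes V[\mu]$ is respected by the weight-space functor $V \mapsto V_\chi$, by the highest-weight-vector functor $V \mapsto V[\lambda]$, and by any differential built from a fixed element of $\cU(\fg)$ acting on $V$. So it suffices to treat $V = L(\mu)$, in which case the target becomes $\delta_{\lambda,\mu}\bC$.

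Second, construct the differentials of $\bF_\bullet$. For each cover $w \lessdot ws_k$ in Bruhat order with $s_k$ simple, the weights $\lambda + \rho - w^{-1}\rho$ and $\lambda + \rho - (ws_k)^{-1}\rho$ differ by $c\alpha_k$, where $c = \langle w^{-1}\rho, \alpha_k^\vee\rangle$ is a positive integer (positivity coming from the cover condition $w\alpha_k > 0$). Define the corresponding matrix entry of $d$ by the divided power $f_{\alpha_k}^{(c)} := f_{\alpha_k}^c/c!$, with signs fixed via a reduced-word convention on each $w \in W$. That $d^2 = 0$ reduces to the classical BGG square lemma applied to each rank-two parabolic subsystem $\langle s_k, s_l\rangle$, where one checks that the two paths around a Bruhat square (or hexagon, in the non-simply-laced case) yield the same composition of divided powers up to the prescribed sign.

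Third, prove acyclicity of $\bF_\bullet$ for $V = L(\mu)$. The Euler characteristic identity $\sum_w (-1)^{\ell(w)} \dim L(\mu)_{\lambda + \rho - w^{-1}\rho} = \delta_{\lambda,\mu}$ follows from the Weyl character formula: multiplying through by the Weyl denominator and extracting the coefficient of $e^{\lambda+\rho}$ reduces the claim to $\sum_w (-1)^{\ell(w)} [w(\mu+\rho) = \lambda+\rho]$, which evaluates to $\delta_{\lambda,\mu}$ by strict dominance of $\mu+\rho$ and $\lambda+\rho$. To upgrade the Euler-characteristic identity to actual exactness, I would induct on the rank of $\fg$ via a parabolic reduction: fixing a simple reflection $s_k$ and decomposing $W$ into left cosets of $\langle s_k\rangle$ produces subcomplexes of $\bF_\bullet$ that are either controlled by the rank-one $\fsl_2$ theory applied to the $\alpha_k$-strings in $L(\mu)$ or reduce to Zelevinsky's theorem for the rank-$(\dim\fh-1)$ Levi obtained by omitting $\alpha_k$. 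The main obstacle is reconciling the sign conventions across Bruhat covers with the inductive structure, which is the technical heart of Zelevinsky's original argument.
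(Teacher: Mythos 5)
The paper does not prove this theorem; it is cited directly from Zelevinsky~\cite{zelevinsky}, so there is no in-paper argument to compare against. Evaluating your proposal on its own terms: the reduction to $V = L(\mu)$ is correct, and so is the Euler-characteristic identity (extracting the coefficient of $e^{\lambda+\rho}$ from $\operatorname{ch}L(\mu)\cdot\sum_w(-1)^{\ell(w)}e^{w\rho}$ gives $\delta_{\lambda,\mu}$ by strict dominance of $\lambda+\rho$ and $\mu+\rho$, and $\ell(w)=\ell(w^{-1})$ handles the $w$ versus $w^{-1}$ discrepancy noted after the theorem).

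The genuine gap is in your construction of the differential. You only define matrix entries for covers of the form $w \lessdot ws_k$ with $s_k$ simple, i.e.\ covers in the right \emph{weak} order, and for those you use the divided power $f_{\alpha_k}^{(c)}$. Such a differential does not square to zero. Already in type $A_2$: the summand of $\bF_2$ indexed by $w = s_1s_2$ sits in weight $\lambda+\alpha_1+2\alpha_2$; the only downward weak-order cover is $s_1 \lessdot s_1s_2$, so the component of $d^2$ out of this summand is the single composite $f_{\alpha_1}\circ f_{\alpha_2}^{(2)}$, which is nonzero in a generic $L(\mu)$. To make $d^2=0$ you must also include the \emph{strong} Bruhat covers that are not weak covers (here $s_2 \lessdot s_1s_2$). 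For these the weight drop is $c\beta$ with $\beta$ a non-simple positive root, and the matrix entry is an element of $U(\fn^-)_{-c\beta}$ that is not a divided power of a single Chevalley generator (in the $A_2$ example it is proportional to $f_1f_2 - \tfrac12 f_2f_1$). This is exactly the content of the Verma--BGG embedding theorem, which supplies a map for every strong Bruhat cover; the ``square lemma'' you invoke concerns compatibility around strong Bruhat squares, not the weak-order hexagon of a rank-two subsystem, so your appeal to it does not match the differential you defined. Your third step acknowledges that upgrading the Euler characteristic to exactness is the technical core; as written, the parabolic-induction sketch has nothing to run on until the differential is corrected to include the strong covers.
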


We will denote this complex either by $\bF^\lambda_\bullet$ or $\bF_\bullet$ depending on the context.

We note that Zelevinsky uses $w(\rho)$ rather than $w^{-1}(\rho)$, but this does not affect the statement since $\ell(w)=\ell(w^{-1})$. We use this modification to simplify some notation.

We will be interested in the case when an algebra $H$ acts on $V$ so that it commutes with $\fg$. Then $V[\lambda]$ is an $H$-module and the complex $\bF_\bullet$ is $H$-equivariant. In all of our cases of interest, $H$ is a reductive Lie algebra. The equivariant Euler characteristic of $\bF_\bullet$ equals the character of $V[\lambda]$, and we will interpret it as a determinant.

\section{Generic matrices} \label{sec:generic-mat}

Let $E,F,U$ be finite-dimensional vector spaces with
\[
  \dim(E)=e, \qquad \dim(F)=f, \qquad \dim U = k,
\]
and set
\[
  V = \Sym(E \otimes U^*) \otimes \Sym(U \otimes F^*).
\]
We set $\fg = \fgl(U)$. There is a commuting action of $H = \fgl(E) \times \fgl(F)$ on $V$.

\begin{remark}
In fact, we get a commuting action of a larger Lie algebra $H' = \fgl(E \oplus F)$ so that $V$ is a direct sum of irreducible $\fg \times H'$ representations (for the explicit formulas for the action, see \cite[\S 5.6.6, Exercise 1]{goodman-wallach}).  
\end{remark}

The $\fgl(U)$-equivariant inclusion $E \otimes F^* \subset (E\otimes U^*) \otimes (U \otimes F^*)$ extends to an algebra homomorphism
\[
  \Sym(E \otimes F^*) \to V.
\]
It is well-known that the image of this map is $V^{\fgl(U)}$, the space of $\fgl(U)$-invariants, and if we interpret $E \otimes F^*$ as the linear functions on $\hom(E,F)$, then the kernel is the ideal generated by the minors of size $k+1$ \cite[Theorems 5.2.1, 12.2.12]{goodman-wallach}.

We identify weights with $k$-tuples of complex numbers and under this identification we can take
\[
  \rho = (k,k-1,\dots,1).
\]
We remark that shifting this choice of $\rho$ by any multiple of $(1,1,\dots,1)$ will not affect any of the formulas below, so we merely make this particular choice for convenience.

If $\dim U = 1$, then for each integer $n$, the $n$-weight space of $V$ is
\[
  L_n = \bigoplus_{d \ge 0} \Sym^d(E) \otimes \Sym^{d+n}(F^*).
\]

\begin{proposition}
For general $k$, we have
\[
  V_\chi = L_{\chi_1} \otimes \cdots \otimes L_{\chi_k}.
\]
\end{proposition}

\begin{proof}
  Pick a weight space decomposition $U = U_1 \oplus \cdots \oplus U_k$. Then we have
  \[
    V \cong \bigotimes_{i=1}^k (\Sym(E \otimes U_i^*)\otimes \Sym(U_i \otimes F^*)).
  \]
  Then the $\chi$-weight space of $V$ is the tensor product over $i$ of the $\chi_i$-weight space of $\Sym(E \otimes U_i^*) \otimes \Sym(U_i \otimes F^*)$.
\end{proof}

\begin{remark}
  $L_n$ is an irreducible representation of $\fgl(E \oplus F)$, though the restriction of this action to $\fgl(E) \times \fgl(F)$ must be modified so that it is the usual action twisted by the character $(A,B) \mapsto \frac12 \trace(A) - \frac12 \trace(B)$.
\end{remark}

Now we consider the general setup. If $X$ is a representation of $\fg$, we use $[X]$ as notation for its character.

\begin{proposition}
  Given a decreasing sequence $\lambda \in \bZ^k$, the character of $V[\lambda]$ is 
\[
  \det( [L_{\lambda_i -i + j}] )_{i,j =1,\dots, k}.
\]
\end{proposition}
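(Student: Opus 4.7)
The plan is to feed the setup straight into Zelevinsky's theorem, apply the preceding proposition to factor every weight space, and then recognize the alternating sum as the Leibniz expansion of a determinant. Since the complex $\bF^\lambda_\bullet$ resolves $V[\lambda]$ and is $H$-equivariant, taking $H$-equivariant Euler characteristic yields
\[
  [V[\lambda]] = \sum_{w \in W} (-1)^{\ell(w)} [V_{\lambda+\rho - w^{-1}(\rho)}],
\]
and the previous proposition lets us rewrite each $[V_\chi]$ as the product $\prod_{i=1}^k [L_{\chi_i}]$. This is the step that turns an alternating sum into a determinant.

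Next I would pin down the combinatorics. With $\fg=\fgl(U)$ the Weyl group is $W=S_k$ acting by permutation of coordinates, and $\rho=(k,k-1,\dots,1)$, so $\rho_j = k+1-j$. Under the convention that $(w\cdot v)_i = v_{w^{-1}(i)}$, one checks $w^{-1}(\rho)_i = \rho_{w(i)} = k+1-w(i)$, and hence
\[
  (\lambda+\rho - w^{-1}(\rho))_i = \lambda_i - i + w(i).
\]
The decisive feature is that the $i$th coordinate depends on $w$ only through $w(i)$: so the factored weight space character associated with $w$ is $\prod_{i=1}^k [L_{\lambda_i - i + w(i)}]$, carrying the sign $(-1)^{\ell(w)} = \sgn(w)$. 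Summing over $w\in S_k$ is then by definition the expansion of $\det([L_{\lambda_i - i + j}])_{i,j=1,\dots,k}$.

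The proof is essentially a formal computation, so there isn't really a hard step; the only place to be careful is the convention by which $w$ acts on the weight lattice, so that Zelevinsky's use of $w^{-1}(\rho)$ yields the clean formula $\lambda_i - i + w(i)$ in the $i$th slot (rather than an expression involving $w^{-1}(i)$, which would still give a determinant after reindexing but a less transparent one). I would also note, before invoking Zelevinsky, that $\lambda$ being decreasing means $\lambda$ is dominant for $\fgl(U)$, so $V[\lambda]$ is well-defined; and that the factor $L_n$ makes sense for every $n \in \bZ$ (the sum is just truncated when $n<0$), so every term in the determinant is legitimate even when some $\lambda_i - i + j$ is negative.
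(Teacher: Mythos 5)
Your proposal is correct and matches the paper's proof essentially verbatim: apply Zelevinsky's theorem, factor the weight space via the preceding proposition, compute $(\lambda+\rho-w^{-1}\rho)_i = \lambda_i - i + w(i)$ using $\rho=(k,\dots,1)$, and identify the resulting alternating sum over $\fS_k$ with the Leibniz/Laplace expansion of the determinant. The additional remarks about dominance and the definition of $L_n$ for negative $n$ are fine clarifications but do not change the argument.
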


\begin{proof}
  We have $W = \fS_k$ and for $w \in W$, $w(\rho)_i = k+1-w^{-1}(i)$ and $(-1)^{\ell(w)} = \sgn(w)$, so the equivariant Euler characteristic of $\bF^\lambda_\bullet$ is given by
  \[
    \sum_{w \in \fS_k} \sgn(w) [L_{\lambda_1 - 1 + w(1)}] \cdots [L_{\lambda_k -k +w(k)}].
  \]
  which is the Laplace expansion of the claimed determinantal expression.
\end{proof}

When $\lambda=0$, this can be used to recover the formula of Gessel in \cite[Theorem 16]{gessel}, which was stated in the language of symmetric functions. To be precise, $V[0] = V^{\fgl(U)}$ is the coordinate ring of the variety of rank $\le k$ matrices of size $e \times f$. Its character is a polynomial in $x_1,\dots,x_e,y_1,\dots,y_f$ which is separately symmetric in the $x$ variables and the $y$ variables and has the expression
\[
  \sum_{\substack{\lambda\\ \ell(\lambda) \le k}} s_\lambda(x_1,\dots,x_e) s_\lambda(y_1,\dots,y_f)
\]
where $s_\lambda$ denotes the Schur polynomial indexed by $\lambda$ and the sum is over all partitions with at most $k$ rows. Letting $h_n = s_{(n)}$, the result above says that this sum is given by the determinant
\[
  \det \left( \sum_{d \ge 0} h_{d}(x_1,\dots,x_e) h_{d+j-i}(y_1,\dots,y_f) \right)_{i,j=1,\dots,k}.
\]
The specialization of a Schur function $s_\lambda$ to $n$ variables is zero if and only if $n<\ell(\lambda)$. Hence if we take $e,f \ge k$, then all of the Schur polynomials above can be replaced by Schur functions in countably many variables, and we get precisely the claimed formula from \cite[Theorem 16]{gessel}. This discussion of the difference between symmetric polynomials in finitely many variables and symmetric functions in infinitely many variables is equally applicable in all later cases, so we won't make any further comment on it.

\begin{remark}
  Actually, Zelevinsky \cite{zelevinsky} gives a more general result that utilizes two weights $\lambda,\mu$. Using this more general formula, the skew Jacobi--Trudi determinant
  \[
    \det([L_{\lambda_i - \mu_j - i + j}])_{i,j=1,\dots,k}
  \]
  computes the character of $\sum_\nu V[\nu]^{\oplus c^\lambda_{\mu, \nu}}$. We don't know if this representation carries any significance. This applies to all cases to follow, but we won't make any further mention of it.
\end{remark}

\begin{example}
  Consider the case $\dim U = 2$. Then Zelevinsky's theorem gives a complex
\[
  0 \to L_1 \otimes L_{-1} \to L_0 \otimes L_0
\]
which ``resolves'' the coordinate ring of rank $\le 2$ matrices.
\end{example}

\begin{remark}
  We can express a highest weight $\lambda$ as a pair $(\mu,\mu')$ where $\ell(\mu)+\ell(\mu') \le \dim U$ and this means $(\mu,0,\dots,0,-{\mu'}^{\rm op})$. The module $V[\lambda]$ is an irreducible $H'$-representation; this is $B_{\mu,\mu'} = M_{\mu,\mu'}$ in \cite[\S 5.5]{lwood}, where it is shown to have the following geometric construction (see \cite{weyman} for general information on this type of construction). If $\ell(\mu) \le a$ and $\ell(\mu') \le b$, define $X = \Gr(e-a,E) \times \Gr(f-b,F^*)$ and  consider the trivial bundle $\cE = (E^* \otimes F) \times X$. Let $\cR_1 \subset E \times X$ denote the pullback of the tautological subbundle on $\Gr(e-a,E)$ and similarly define $\cR_2$. Also define $\cQ_1 = E/\cR_1$ and $\cQ_2 = F^*/\cR_2$. Then $\xi = \cR_1 \otimes \cR_2$ gives linear equations for a subbundle $\Spec(\Sym(\eta))$ where
  \[
    \eta = (E \otimes F^*) / \xi.
  \]
  Let $\pi \colon \cE \to E^* \otimes F$ denote the projection. Then we have
  \[
    V[\lambda] = \pi_*(\bS_\mu(\cQ_1) \otimes \bS_{\mu'}(\cQ_2) \otimes \Sym(\eta))
  \]
  and in fact the higher direct images vanish. We do not know of a simple formula for its character which isn't an alternating sum.
\end{remark}

\begin{remark} \label{rmk:exterior}
  We could instead use the representation
  \[
    V = \bigwedge(E \otimes U^*) \otimes \bigwedge(U \otimes F^*).
  \]
  However, this doesn't give anything substantially new: on the level of characters, it just amounts to applying the $\omega$ involution to the previous case for both $\fgl(E)$ and $\fgl(F)$.

  The same remark applies to the representation
  \[
    V = \bigwedge(E \otimes U^*) \otimes \Sym(U \otimes F^*).
  \]
  However, in this case, the commutator of $\fgl(U)$ is the Lie superalgebra $\fgl(E|F)$, so we get some interesting determinantal expressions for the characters for a certain class of its representations.
\end{remark}

\section{Skew-symmetric matrices} \label{sec:skew-sym}

Let $E$ be a finite-dimensional vector space with $\dim(E)=e$ and let $U$ be a $2k$-dimensional symplectic space and set
\[
  V = \Sym(E \otimes U).
\]
We set $\fg = \fsp(U)$. There is a commuting action of $H = \fgl(E)$ on $V$.

\begin{remark}
  There is a natural orthogonal form on $E \oplus E^*$ and there is a commuting action of the larger Lie algebra $H' = \fso(E \oplus E^*)$ so that $V$ is a direct sum of irreducible $\fg \times H'$ representations (for the explicit formulas for the action, see \cite[\S 5.6.5]{goodman-wallach}).
\end{remark}

The symplectic form on $U$ gives a $\fsp(U)$-equivariant inclusion
\[
  \bigwedge^2E \subset \bigwedge^2E \otimes \bigwedge^2U \subset \Sym^2(E\otimes U)
\]
(the second inclusion is via the space of $2 \times 2$ determinants) which extends to an algebra homomorphism
\[
  \Sym(\bigwedge^2 E) \to V.
\]
It is well-known that the image of this map is $V^{\fsp(U)}$, the space of $\fsp(U)$-invariants, and if we interpret $\bigwedge^2 E$ as the linear functions on the space of skew-symmetric matrices $\bigwedge^2(E^*)$, then the kernel is the ideal generated by the Pfaffians of size $2(k+1)$ \cite[Theorems 5.2.2, 12.2.15]{goodman-wallach}.

We identify weights of $\fsp(U)$ with $k$-tuples of complex numbers and under this identification, we have
\[
  \rho = (k,k-1,\dots,1).
\]

If $\dim U = 2$, then for each integer $n$, the $n$-weight space of $V$ is
\[
  L_n = \bigoplus_{d \ge 0} \Sym^d(E) \otimes \Sym^{d+n}(E).
\]

\begin{proposition}
For general $k$, we have
\[
  V_\chi = L_{\chi_1} \otimes \cdots \otimes L_{\chi_k}.
\]
\end{proposition}

\begin{proof}
  Pick a weight space decomposition $U = (U_1 \oplus U_1^*) \oplus \cdots \oplus (U_k \oplus U_k^*)$. Then we have
  \[
    V \cong \bigotimes_{i=1}^k (\Sym(E \otimes (U_i \oplus U_i^*))).
  \]
  Then the $\chi$-weight space of $V$ is the tensor product over $i$ of the $\chi_i$-weight space of $\Sym(E \otimes (U_i \oplus U_i^*))$.
\end{proof}

\begin{remark}
  $L_n$ is an irreducible representation of $\fso(E \oplus E^*)$, though the restriction of this action to $\fgl(E)$ must be modified so that it is the usual action twisted by the character $A \mapsto -\trace(A)$. 
\end{remark}

Now we consider the general setup. If $X$ is a representation of $\fg$, we use $[X]$ as notation for its character.

\begin{proposition}
  Given a partition $\lambda$ with $\ell(\lambda)\le k$, the character of $V[\lambda]$ is 
\[
  \det ([L_{\lambda_i -i+j}] - [L_{\lambda_i -i+ 2k+2-j}])_{i,j=1,\dots,k}.
\]
\end{proposition}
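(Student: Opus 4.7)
The plan is to compute the equivariant Euler characteristic of Zelevinsky's complex $\bF^\lambda_\bullet$ directly and then massage the resulting alternating sum into the claimed determinantal form by peeling the sum over the hyperoctahedral Weyl group into a permutation part and a sign part. The previous proposition already lets us write $V_{\lambda + \rho - w^{-1}(\rho)}$ as a tensor product of $L$'s, so the content is entirely bookkeeping with the Weyl group of type $C_k$.

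First I would recall that the Weyl group $W$ of $\fsp(U)$ is the hyperoctahedral group $(\bZ/2)^k \rtimes \fS_k$ acting on the weight lattice $\bZ^k$ by signed permutations. I would parametrize $w \in W$ by a pair $(\sigma, \epsilon) \in \fS_k \times \{\pm 1\}^k$ chosen so that
\[
  w^{-1}(\rho)_i = \epsilon_i \, \rho_{\sigma(i)}.
\]
As $w$ ranges over $W$, the pair $(\sigma, \epsilon)$ ranges over all of $\fS_k \times \{\pm 1\}^k$. Since $(-1)^{\ell(w)} = \det(w)$ and the matrix of $w$ factors as a permutation matrix times a diagonal sign matrix, we have $(-1)^{\ell(w)} = \sgn(\sigma) \prod_i \epsilon_i$.

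Next, with $\rho_j = k+1-j$, a direct computation gives
\[
  \lambda_i + \rho_i - \epsilon_i \rho_{\sigma(i)} = \begin{cases} \lambda_i - i + \sigma(i) & \epsilon_i = +1, \\ \lambda_i - i + 2k+2 - \sigma(i) & \epsilon_i = -1. \end{cases}
\]
Combining this with the formula $V_\chi = L_{\chi_1} \otimes \cdots \otimes L_{\chi_k}$ from the previous proposition, the equivariant Euler characteristic of $\bF^\lambda_\bullet$ becomes
\[
  \sum_{\sigma \in \fS_k} \sum_{\epsilon \in \{\pm 1\}^k} \sgn(\sigma) \prod_{i=1}^k \epsilon_i [L_{\lambda_i + \rho_i - \epsilon_i \rho_{\sigma(i)}}].
\]
The inner sum over $\epsilon$ factors as a product over $i$, giving
\[
  \sum_{\sigma \in \fS_k} \sgn(\sigma) \prod_{i=1}^k \left( [L_{\lambda_i - i + \sigma(i)}] - [L_{\lambda_i - i + 2k + 2 - \sigma(i)}] \right),
\]
which is exactly the Laplace expansion of the determinant in the statement. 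Since $V[\lambda]$ is the zeroth cohomology of $\bF^\lambda_\bullet$ and the complex is acyclic by Zelevinsky's theorem, this Euler characteristic agrees with $[V[\lambda]]$.

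The main obstacle is purely notational: getting the sign conventions right for $(-1)^{\ell(w)}$ under the signed-permutation parametrization, and confirming that the action of $w^{-1}$ on $\rho$ indeed ranges over all signed permutations of $\rho$ as $w$ varies (so that one may sum freely over $(\sigma, \epsilon)$). Once these conventions are pinned down, the rest is just factoring the sum over $\epsilon$ and recognizing the Laplace expansion.
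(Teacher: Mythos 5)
Your proof is correct and follows essentially the same route as the paper: parametrize the type-$C_k$ Weyl group by a permutation-plus-signs pair, compute the weight $\lambda+\rho-w^{-1}(\rho)$ in terms of that pair, and observe that the sum over signs factors across coordinates to produce the Laplace expansion of the claimed determinant. The only cosmetic difference is in how the sign $(-1)^{\ell(w)}$ is justified --- you use $(-1)^{\ell(w)}=\det(w)$ for a signed permutation matrix, while the paper argues via the Coxeter generator being a single sign change and conjugacy; both are valid and yield $\sgn(\sigma)\prod_i\epsilon_i$.
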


\begin{proof}
  Every element of $W$ can be factored as $\alpha w$ where $\alpha \in \{\pm 1\}^k$ and $w \in \fS_k$; since negation in the last entry is a Coxeter generator and all negations in a single entry are conjugate, we get $\ell(\alpha w) = |\alpha| + \ell(w) \pmod 2$ where $|\alpha|$ is the number of negative signs of $\alpha$. So the Euler characteristic of the complex $\bF^\lambda_\bullet$ is
\begin{align*}
  &  \sum_{\alpha\in \{\pm 1\}^k} (-1)^{|\alpha|} \sum_{w \in \fS_k} \sgn(w) [L_{\lambda_1 + k - \alpha_1 (k+1-w(1))}] \cdots [ L_{\lambda_k + 1 - \alpha_k (k+1-w(k))}] \\
  =& \sum_{w \in \fS_k} \sgn(w) ([L_{\lambda_1 - 1 + w(1)}] - [L_{\lambda_1 + 2k+1-w(1)}]) \cdots ([L_{\lambda_k - k + w(k)}] - [L_{\lambda_k + k+2-w(k)}])\\
  =& \det ([L_{\lambda_i -i+j}] - [L_{\lambda_i -i+ 2k+2-j}])_{i,j=1,\dots,k}. \qedhere
\end{align*}
\end{proof}

This gives an analogue of Gessel's determinantal formula for the coordinate ring of skew-symmetric matrices of rank $\le 2k$ by taking $\lambda=0$ and applying the substitution $i\mapsto k+1-i$ and $j\mapsto k+1-j$, which we record as the following theorem.

\begin{theorem} For each $k$, we have
  \[
  \sum_{\substack{\lambda\\ \lambda_1 \le k}} s_{(2\lambda)^\dagger} = \det ([L_{j-i}] - [L_{i+j}])_{i,j=1,\dots,k}.
\]
\end{theorem}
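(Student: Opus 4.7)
The plan is to specialize the preceding proposition at $\lambda = 0$ and identify both sides with familiar objects.

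For the left-hand side, I would invoke the identification made earlier in this section: $V[0] = V^{\fsp(U)}$ is the coordinate ring of the variety of skew-symmetric $e \times e$ matrices of rank $\le 2k$. By a classical $\fgl(E)$-module decomposition (the skew-symmetric Cauchy-type identity, or equivalently De Concini--Procesi), this coordinate ring decomposes as
\[
V[0] \cong \bigoplus_{\lambda_1 \le k} \bS_{(2\lambda)^\dagger} E,
\]
so its character is precisely $\sum_{\lambda_1 \le k} s_{(2\lambda)^\dagger}$, matching the left-hand side of the theorem. (When $e$ is too small, the Schur functors of partitions of length greater than $e$ vanish, which is the usual finite-variable truncation already addressed in the generic matrix section.)

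For the right-hand side, I would plug $\lambda = 0$ into the previous proposition to get
\[
[V[0]] = \det\bigl([L_{-i+j}] - [L_{-i+2k+2-j}]\bigr)_{i,j=1,\dots,k},
\]
and then reindex by $i \mapsto k+1-i$ and $j \mapsto k+1-j$. This substitution simultaneously reverses all rows and all columns, so it preserves the determinant. Under it, $-i+j \mapsto i-j$ and $-i+2k+2-j \mapsto i+j$, giving
\[
\det\bigl([L_{i-j}] - [L_{i+j}]\bigr)_{i,j=1,\dots,k}.
\]
Finally, from the definition $L_n = \bigoplus_{d\ge 0} \Sym^d(E) \otimes \Sym^{d+n}(E)$ one reads off (swap the two tensor factors and reindex $d \mapsto d+n$) that $L_n \cong L_{-n}$ as $\fgl(E)$-modules, so $[L_{i-j}] = [L_{j-i}]$, which matches the stated entries.

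The expected main obstacle is essentially nothing substantive: the preceding proposition supplies all the representation-theoretic content, the decomposition of the coordinate ring of the skew-symmetric determinantal variety is standard, and the remaining work is the bookkeeping of the row/column reversal together with the symmetry $[L_n] = [L_{-n}]$.
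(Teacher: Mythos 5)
Your proposal is correct and matches the paper's argument: set $\lambda = 0$ in the preceding proposition, identify $V[0]$ with the coordinate ring of rank $\le 2k$ skew-symmetric matrices and its known $\fgl(E)$-decomposition $\bigoplus_{\lambda_1 \le k} \bS_{(2\lambda)^\dagger}E$, and then reindex the determinant by $i \mapsto k+1-i$, $j \mapsto k+1-j$. You are slightly more explicit than the paper in invoking the symmetry $L_n \cong L_{-n}$ to rewrite $[L_{i-j}]$ as $[L_{j-i}]$, which the paper leaves tacit at this point (it is noted later in the spinor section).
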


\begin{remark}
The module $V[\lambda]$ is an irreducible $H'$-representation; this is $B_{\lambda} = M_{\lambda}$ in \cite[\S 3.5]{lwood}, where it is shown to have the following geometric construction. Define $X = \Gr(e-k,E^*)$ and consider the trivial bundle $\cE = \bigwedge^2 E^* \times X$. Let $\cR \subset E^* \times X$ denote the  tautological subbundle on $\Gr(e-k,E^*)$ and define $\cQ = E^*/\cR$. Then $\xi = \bigwedge^2 \cR$ gives linear equations for a subbundle $\Spec(\Sym(\eta))$ where
  \[
    \eta = \bigwedge^2 E^* / \xi.
  \]
  Let $\pi \colon \cE \to \bigwedge^2 E^*$ denote the projection. Then we have
  \[
    V[\lambda] = \pi_*(\bS_\lambda\cQ \otimes \Sym(\eta))
  \]
  and in fact the higher direct images vanish. We do not know of a simple formula for its character which isn't an alternating sum.
\end{remark}

\begin{example}
  For $\dim U = 2$, Zelevinsky's theorem gives a complex
\[
  0 \to L_{-2} \to L_0
\]
which ``resolves'' the coordinate ring of rank $\le 2$ skew-symmetric matrices.
\end{example}

\begin{example}
  If $\dim U = 4$, we get
  \[
    0 \to L_{4} \otimes L_{2} \to
    {\begin{array}{c} L_{4} \otimes L_0 \\ L_{3} \otimes L_{3} \end{array}} \to
    {\begin{array}{c} L_{3} \otimes L_1 \\ L_{1} \otimes L_{3} \end{array}} \to
    {\begin{array}{c} L_{0} \otimes L_{2} \\ L_{1} \otimes L_{-1} \end{array}} \to
    L_0 \otimes L_0
  \]
  which ``resolves'' the coordinate ring of the rank $\le 4$ skew-symmetric matrices.
\end{example}

\section{Symmetric matrices} \label{sec:sym}

Let $E$ be a finite-dimensional vector space with $\dim(E)=e$ and let $U$ be an $m$-dimensional orthogonal space and set
\[
  V = \Sym(E \otimes U).
\]
We set $\fg = \fso(U)$. There is a commuting action of $H = \fgl(E)$ on $V$.

\begin{remark}
  There is a natural symplectic form on $E \oplus E^*$ and there is a commuting action of the larger Lie algebra $H' = \fsp(E \oplus E^*)$ so that $V$ is a direct sum of irreducible $\bO(U) \times H'$ representations (for the explicit formulas for the action, see \cite[\S 5.6.3]{goodman-wallach}).
\end{remark}

The orthogonal form on $U$ gives a $\bO(U)$-equivariant inclusion
\[
  \Sym^2E \subset \Sym^2E \otimes \Sym^2U \subset \Sym^2(E\otimes U)
\]
which extends to an algebra homomorphism
\[
  \Sym(\Sym^2 E) \to V.
\]
It is well-known that the image of this map is $V^{\bO(U)}$, the space of $\bO(U)$-invariants, and if we interpret $\Sym^2 E$ as the linear functions on the space of symmetric matrices $\Sym^2(E^*)$, then the kernel is the ideal generated by the minors of size $m+1$ \cite[Theorems 5.2.2, 12.2.14]{goodman-wallach}.

There is a subtle difference when compared to the previous cases: the invariants for the group $\bO(U)$ and the subgroup $\SO(U)$ (or equivalently, the Lie algebra $\fso(U)$) are not the same. In fact, the invariant space $V^{\fso(U)}$ is a degree 2 module over the determinantal ring $V^{\bO(U)}$. All of our results will be about the action of $\fso(U)$.

We will treat the cases of $m$ odd and $m$ even separately.

\subsection{Even case}

First suppose that $m=2k$ is even. We identify weights of $\fso(U)$ with $k$-tuples of complex numbers. Then
\[
  \rho = (k-1,k-2,\dots,0).
\]

First consider the case $\dim U = 2$. For each integer $n$, the $n$-weight space of $V$ is
\[
  L_n = \bigoplus_{d \ge 0} \Sym^d(E) \otimes \Sym^{d+n}(E).
\]

\begin{proposition} \label{prop:even-orth}
For general $k$, we have
\[
  V_\chi = L_{\chi_1} \otimes \cdots \otimes L_{\chi_k}.
\]
\end{proposition}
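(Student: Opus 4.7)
The plan is to mirror exactly the argument given for the symplectic analogue in Section~\ref{sec:skew-sym}. First, I would fix a weight-space decomposition of $U$ compatible with the orthogonal form and the chosen Cartan of $\fso(U)$. Since $U$ is $2k$-dimensional with a non-degenerate symmetric form, one has the standard hyperbolic decomposition $U = (U_1 \oplus U_1^*) \oplus \cdots \oplus (U_k \oplus U_k^*)$, where $U_i$ is the one-dimensional weight space of weight $\eps_i$ and $U_i^*$ is its dual, of weight $-\eps_i$, and each $U_i \oplus U_i^*$ is a hyperbolic plane (so the restriction of the orthogonal form to each summand is non-degenerate, though this plays no direct role here).

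Next I would apply the symmetric-algebra functor to the decomposition $E \otimes U = \bigoplus_{i=1}^k (E \otimes (U_i \oplus U_i^*))$ to obtain an $\fgl(E)$-equivariant isomorphism
\[
  V = \Sym(E \otimes U) \cong \bigotimes_{i=1}^k \Sym(E \otimes (U_i \oplus U_i^*)),
\]
which moreover respects the grading by the Cartan of $\fso(U)$, since each factor depends only on the $i$-th weight coordinate. Taking $\chi$-weight spaces then gives
\[
  V_\chi = \bigotimes_{i=1}^k \Sym(E \otimes (U_i \oplus U_i^*))_{\chi_i}.
\]

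Finally, each factor on the right-hand side is an instance of the $\dim U = 2$ case, and by the definition of $L_n$ preceding the proposition, the $\chi_i$-weight space of $\Sym(E \otimes (U_i \oplus U_i^*))$ equals $L_{\chi_i}$: concretely, if $u_i, u_i^*$ are weight bases with weights $\pm \eps_i$, then monomials of bidegree $(d+n, d)$ in $(u_i, u_i^*)$ have weight $n\eps_i$, giving the summand $\Sym^{d+n}(E) \otimes \Sym^d(E)$ of $L_n$.

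There is no real obstacle; the argument is essentially verbatim from the symplectic case, the only point requiring comment being the existence of a Cartan-adapted decomposition of $U$ into hyperbolic planes, which is standard for $\fso(2k)$. The odd case $m=2k+1$ will require a separate treatment (handled later in the section) because then $U$ contains an extra one-dimensional zero-weight summand that does not split off as a hyperbolic plane.
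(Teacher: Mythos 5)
Your proof is correct and follows the same route as the paper's: decompose $U$ into $k$ hyperbolic planes compatible with the Cartan, use multiplicativity of $\Sym$ to factor $V$, and identify each factor's $\chi_i$-weight space as $L_{\chi_i}$. The extra detail you give about weights and bidegrees is a harmless elaboration of the same argument.
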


\begin{proof}
  Pick a weight space decomposition $U = (U_1 \oplus U_1^*) \oplus \cdots \oplus (U_k \oplus U_k^*)$. Then we have
  \[
    V \cong \bigotimes_{i=1}^k (\Sym(E \otimes (U_i \oplus U_i^*))).
  \]
  Then the $\chi$-weight space of $V$ is the tensor product over $i$ of the $\chi_i$-weight space of $\Sym(E \otimes (U_i \oplus U_i^*))$.
\end{proof}

\begin{remark}
  For $n \ne 0$, $L_n$ is an irreducible representation of $\fsp(E \oplus E^*)$, though the restriction of this action to $\fgl(E)$ must be modified so that it is the usual action twisted by the character $A \mapsto -\trace(A)$. For $n=0$, $L_0$ is a direct sum of two irreducible representations which can be described as
  \[
    \bigoplus_{d \ge 0} \Sym^2(\Sym^d E), \qquad \bigoplus_{d \ge 0} \bigwedge^2(\Sym^d E). \qedhere
  \]
\end{remark}

\begin{proposition}
  Given a partition $\lambda$ with $\ell(\lambda)\le k$, the character of $V[\lambda]$ is 
\[
  \frac12 \det ([L_{\lambda_i -i+j}] + [L_{\lambda_i -i+ 2k-j}])_{i,j=1,\dots,k}.
\]
\end{proposition}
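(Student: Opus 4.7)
The plan is to follow the same template as the skew-symmetric case, with the key modification that the Weyl group of $\fso(2k)$ consists only of signed permutations with an \emph{even} number of sign changes. Concretely, I would write every $w\in W(D_k)$ uniquely as $w=\alpha\sigma$ with $\sigma\in\fS_k$ and $\alpha\in\{\pm1\}^k$ having $|\alpha|$ even, and invoke Zelevinsky's theorem together with Proposition~\ref{prop:even-orth} to write the equivariant Euler characteristic of $\bF^\lambda_\bullet$ as
\[
  \sum_{\substack{\alpha\in\{\pm1\}^k\\|\alpha|\text{ even}}}\sum_{\sigma\in\fS_k}(-1)^{\ell(\alpha\sigma)}\,\prod_{i=1}^k[L_{\lambda_i+\rho_i-\alpha_i\rho_{\sigma(i)}}].
\]
Using $\rho=(k-1,\dots,0)$, the exponent in position $i$ is $\lambda_i+\sigma(i)-i$ when $\alpha_i=+1$ and $\lambda_i+2k-i-\sigma(i)$ when $\alpha_i=-1$. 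A small lemma (or direct inspection of the $D_k$ simple reflections) gives $(-1)^{\ell(\alpha\sigma)}=\sgn(\sigma)$, since the $|\alpha|$-contribution is always even.

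Next I would carry out the sum over $\alpha$ before summing over $\sigma$. Abbreviating $a_i=[L_{\lambda_i+\sigma(i)-i}]$ and $b_i=[L_{\lambda_i+2k-i-\sigma(i)}]$, the standard parity identity
\[
  \sum_{|\alpha|\text{ even}}\prod_{i:\alpha_i=+1}a_i\prod_{i:\alpha_i=-1}b_i=\tfrac12\Bigl(\prod_{i=1}^k(a_i+b_i)+\prod_{i=1}^k(a_i-b_i)\Bigr)
\]
turns the Euler characteristic into
\[
  \tfrac12\det\bigl([L_{\lambda_i-i+j}]+[L_{\lambda_i-i+2k-j}]\bigr)+\tfrac12\det\bigl([L_{\lambda_i-i+j}]-[L_{\lambda_i-i+2k-j}]\bigr)
\]
after Laplace-expanding both summands against $\sum_\sigma\sgn(\sigma)$.

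The final step, and the only non-routine observation, is that the second determinant vanishes: at $j=k$ the entry becomes $[L_{\lambda_i+k-i}]-[L_{\lambda_i+k-i}]=0$, so the $k$th column of that matrix is identically zero. Hence only the first determinant survives, giving the claimed formula. I expect this last vanishing, together with the verification of the sign character $(-1)^{\ell(\alpha\sigma)}=\sgn(\sigma)$ on $W(D_k)$, to be the only subtle points; both are the precise places where the $D_k$-case differs from the $C_k$-case handled in Section~\ref{sec:skew-sym}.
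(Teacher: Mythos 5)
Your proof is correct and follows essentially the same route as the paper's: factor $W(D_k)$ elements as $\alpha\sigma$ with $\alpha$ having an even number of sign changes, observe $(-1)^{\ell(\alpha\sigma)}=\sgn(\sigma)$, and then handle the even-$\alpha$ constraint. The only difference is presentational: the paper observes directly that in the term indexed by position $w^{-1}(k)$ both choices of $\alpha_i$ give the same factor, so one may sum over \emph{all} $\alpha\in\{\pm1\}^k$ and divide by $2$; you instead apply the parity identity to get $\tfrac12\det(a+b)+\tfrac12\det(a-b)$ and then note that the second determinant has an identically zero $k$th column. These are the same observation packaged two ways, and your version is arguably a touch cleaner since the vanishing of the second determinant is an immediate column check.
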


\begin{proof}
Let $\{\pm 1\}^k_0$ be the subgroup of $\{\pm 1\}^k$ consisting of elements with an even number of $-1$'s. Every element of the Weyl group can be factored as $\alpha w$ where $\alpha \in (\bZ/2)^k_0$ and $w \in \fS_k$; since negating the last 2 entries and swapping them is a Coxeter generator, we get $\ell(\alpha w) = \ell(w) \pmod 2$. So the Euler characteristic of the complex $\bF^\lambda_\bullet$ is
\begin{align*}
  &  \sum_{\alpha \in \{\pm 1\}^k_0} \sum_{w \in \fS_k} \sgn(w) [L_{\lambda_1 + k - 1 - \alpha_1 (k - w(1))}] \cdots [ L_{\lambda_k - \alpha_k (k - w(k))}]
\end{align*}
Given $w \in \fS_k$, let $i = w^{-1}(k)$. Then for either choice of $\alpha_i \in \{\pm 1\}$, we get the same term $[L_{\lambda_i + k - i}]$. In particular, we may sum over all choices of $\alpha \in \{\pm 1\}^k$ if we divide by 2:
\begin{align*}
&  \frac12 \sum_{\alpha \in \{\pm 1\}^k} \sum_{w \in \fS_k} \sgn(w) [L_{\lambda_1 + k - 1 - \alpha_1 (k - w(1))}] \cdots [ L_{\lambda_k - \alpha_k (k - w(k))}]\\
  =& \frac12 \sum_{\sigma \in \fS_k} \sgn(w) ([L_{\lambda_1 -1 + w(1)}] + [L_{\lambda_1 + 2k - 1 - w(1)}]) \cdots ([L_{\lambda_k - k + w(k)}] + [L_{\lambda_k + k - w(k)}])\\
  =&  \frac12 \det ([L_{\lambda_i - i + j}] + [L_{\lambda_i -i + 2k - j}])_{i,j = 1, \dots, k}. \qedhere
\end{align*}
\end{proof}

\begin{remark}
  The module $V[\lambda]$ is $\ol{B}_{\lambda} = \ol{M}_{\lambda}$ in \cite[\S 4.6]{lwood}, where it is shown to have the following geometric construction. Define $X = \Gr(e-k,E^*)$ and consider the trivial bundle $\cE = \Sym^2 E^* \times X$. Let $\cR \subset E^* \times X$ denote the tautological subbundle on $\Gr(e-k,E^*)$ and define $\cQ = E^*/\cR$. Then $\xi = \Sym^2 \cR$ gives linear equations for a subbundle $\Spec(\Sym(\eta))$ where
  \[
    \eta = \Sym^2 E^* / \xi.
  \]
  Let $\pi \colon \cE \to \Sym^2 E^*$ denote the projection. Then we have
  \[
    V[\lambda] = \pi_*(\bS_\lambda\cQ \otimes \Sym(\eta))
  \]
  and in fact the higher direct images vanish. Note that $\Spec(V[0])$ is a double cover of a determinantal variety and that each $V[\lambda]$ is in fact supported on it. We do not know of a simple formula for its character which isn't an alternating sum.
\end{remark}

The $\fso(U)$ representation $\bS_\nu(U)$ has nonzero invariants if and only if, writing $\nu = (\nu_1,\dots,\nu_{2k})$, we have that all $\nu_i$ are even, or all $\nu_i$ are odd. Furthermore, when this holds, the space of $\fso(U)$-invariants is always 1-dimensional (this follows from \cite[\S 11.2.1, Theorem]{procesi}). Hence, when $\lambda=0$, we get the following special case of the previous result:
\[
  \sum_{\substack{\mu\\ \ell(\mu) \le 2k}} (s_{(1+2\mu_1,\dots,1+2\mu_{2k})} + s_{2\mu}) = \frac12 \det([L_{j-i}] + [L_{2k-i-j}])_{i,j=1,\dots,k}.
\]

\subsection{Odd case}

Suppose $m=2k+1$ is odd. Then
\[
  \rho = (k-\frac12,k-\frac32,\dots,\frac12).
\]

\begin{proposition}\label{prop:odd-orth}
For general $k$, we have
\[
  V_\chi = \Sym(E) \otimes L_{\chi_1} \otimes \cdots \otimes L_{\chi_k}.
\]
\end{proposition}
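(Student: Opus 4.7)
The plan is to mimic the argument used in Proposition~\ref{prop:even-orth} (the even-dimensional case), accounting for the one extra direction that appears in an odd-dimensional orthogonal space. Specifically, an orthogonal space $U$ of dimension $2k+1$ admits a decomposition
\[
  U = U_0 \oplus (U_1 \oplus U_1^*) \oplus \cdots \oplus (U_k \oplus U_k^*)
\]
where each $U_i \oplus U_i^*$ is a hyperbolic plane spanned by isotropic weight vectors of weights $\pm e_i$ and $U_0$ is a one-dimensional non-degenerate orthogonal subspace on which the chosen Cartan of $\fso(U)$ acts trivially.

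First, I would use the standard isomorphism $\Sym(A \oplus B) \cong \Sym(A) \otimes \Sym(B)$ to write
\[
  V = \Sym(E \otimes U_0) \otimes \bigotimes_{i=1}^k \Sym(E \otimes (U_i \oplus U_i^*)),
\]
and observe that this identification is compatible with both the $\fgl(E)$-action and the $\fso(U)$-weight grading (the Cartan acts on each factor separately through its restriction to the corresponding summand of $U$).

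Next, since $U_0$ has Cartan weight $0$, the factor $\Sym(E \otimes U_0) \cong \Sym(E)$ is concentrated in the zero weight space, so it contributes to every $\chi$-weight space of $V$ without shifting the weights of the other factors. Consequently,
\[
  V_\chi = \Sym(E) \otimes \bigotimes_{i=1}^k \bigl(\Sym(E \otimes (U_i \oplus U_i^*))\bigr)_{\chi_i}.
\]
For each $i$, the same argument used in the $\dim U = 2$ discussion preceding Proposition~\ref{prop:even-orth} identifies the $\chi_i$-weight space of $\Sym(E \otimes (U_i \oplus U_i^*))$ with $L_{\chi_i}$, giving the claim.

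I do not expect any substantive obstacle: this is the even case carried through verbatim, with one additional tensor factor tracking the zero-weight direction $U_0$. The only point requiring mild care is to verify that the decomposition $U = U_0 \oplus \bigoplus_i (U_i \oplus U_i^*)$ is indeed compatible with the chosen Cartan of $\fso_{2k+1}$ (so that $U_0$ really is a weight-$0$ summand), ensuring that the half-integer shifts appearing in $\rho = (k-\tfrac12,\dots,\tfrac12)$ play no role in the weight space decomposition of $V$ itself.
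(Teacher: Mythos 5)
Your proof is correct and follows exactly the same approach as the paper: pick the weight space decomposition $U = \bC \oplus (U_1 \oplus U_1^*) \oplus \cdots \oplus (U_k \oplus U_k^*)$ (your $U_0$ is the paper's $\bC$), factor $\Sym(E\otimes U)$ accordingly, and identify the weight-$\chi_i$ space of each hyperbolic-plane factor with $L_{\chi_i}$. The only difference is that you spell out the observation that the $U_0$ factor sits in weight zero, which the paper leaves implicit.
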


\begin{proof}
  Pick a weight space decomposition $U = \bC \oplus (U_1 \oplus U_1^*) \oplus \cdots \oplus (U_k \oplus U_k^*)$. Then we have
  \[
    V \cong \Sym(E) \otimes \bigotimes_{i=1}^k (\Sym(E \otimes (U_i \oplus U_i^*))).
  \]
  Then the $\chi$-weight space of $V$ is the tensor product over $i$ of the $\chi_i$-weight space of $\Sym(E \otimes (U_i \oplus U_i^*))$.
\end{proof}

\begin{proposition}
  Given a partition $\lambda$ with $\ell(\lambda)\le k$, the character of $V[\lambda]$ is 
  \[
    [\Sym(E)] \det ([L_{\lambda_i-i+j}] - [L_{\lambda_i + 2k + 1 - i - j}])_{i,j=1,\dots, k}.
  \]
\end{proposition}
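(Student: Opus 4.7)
The proof will run parallel to the symplectic / even-orthogonal cases, with the main differences being the presence of the extra $[\Sym(E)]$ factor from Proposition~\ref{prop:odd-orth} and the half-integer $\rho$.

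The plan is to apply Zelevinsky's theorem to the $\fg=\fso(U)$-representation $V=\Sym(E\otimes U)$ with $\dim U=2k+1$, and compute the resulting equivariant Euler characteristic directly. The Weyl group of type $B_k$ is $\{\pm 1\}^k\rtimes \fS_k$, and every element factors uniquely as $\alpha w$ with $\alpha\in\{\pm 1\}^k$ and $w\in\fS_k$. Since negation in the last entry is a Coxeter generator and all single-entry negations are conjugate, the same length count as in the symplectic case applies: $\ell(\alpha w)\equiv |\alpha|+\ell(w)\pmod 2$, where $|\alpha|$ is the number of $-1$'s in $\alpha$. In particular the sign $(-1)^{\ell(\alpha w)}$ factors as $(-1)^{|\alpha|}\sgn(w)$.

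Next, I would compute $\lambda+\rho-(\alpha w)^{-1}(\rho)$ entry by entry. Using $\rho_i=k+\frac12-i$, the standard $B_k$ action gives $((\alpha w)^{-1}\rho)_i=\alpha_i\,\rho_{w(i)}=\alpha_i(k+\tfrac12-w(i))$, so the $i$-th component of the weight is
\[
  \lambda_i+(k+\tfrac12-i)-\alpha_i(k+\tfrac12-w(i)),
\]
which equals $\lambda_i-i+w(i)$ when $\alpha_i=+1$ and $\lambda_i+2k+1-i-w(i)$ when $\alpha_i=-1$ (the half-integers cancel cleanly in both cases). Applying Proposition~\ref{prop:odd-orth} to identify the corresponding weight space as $\Sym(E)\otimes L_{\chi_1}\otimes\cdots\otimes L_{\chi_k}$, the Euler characteristic of $\bF_\bullet^\lambda$ becomes
\[
  [\Sym(E)]\sum_{w\in\fS_k}\sgn(w)\sum_{\alpha\in\{\pm 1\}^k}(-1)^{|\alpha|}\prod_{i=1}^k[L_{\lambda_i+(k+\frac12-i)-\alpha_i(k+\frac12-w(i))}].
\]

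The key algebraic step is then that the sum over $\alpha$ factors through the product: for each fixed $w$, summing over $\alpha_i\in\{\pm 1\}$ independently in the $i$-th slot (with sign $-\alpha_i$ replaced by $(-1)^{(1-\alpha_i)/2}$) converts the $i$-th factor into
\[
  [L_{\lambda_i-i+w(i)}]-[L_{\lambda_i+2k+1-i-w(i)}].
\]
Hence the Euler characteristic equals
\[
  [\Sym(E)]\sum_{w\in\fS_k}\sgn(w)\prod_{i=1}^k\bigl([L_{\lambda_i-i+w(i)}]-[L_{\lambda_i+2k+1-i-w(i)}]\bigr),
\]
which is exactly the Laplace expansion of $[\Sym(E)]\det([L_{\lambda_i-i+j}]-[L_{\lambda_i+2k+1-i-j}])_{i,j=1,\dots,k}$.

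The computation has no real obstacle; the only point of care is bookkeeping with the half-integer $\rho$ and verifying that the length parity identity $\ell(\alpha w)\equiv|\alpha|+\ell(w)\pmod 2$ indeed transfers from type $C_k$ to type $B_k$ (which it does, since the two Weyl groups are isomorphic as Coxeter groups up to the distinction from type $D_k$ that matters in the even-orthogonal case).
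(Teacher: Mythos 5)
Your proof is correct and follows essentially the same route as the paper: factor the type $B_k$ Weyl group as $\{\pm 1\}^k\rtimes\fS_k$, use $\ell(\alpha w)\equiv|\alpha|+\ell(w)\pmod 2$, identify the relevant weight spaces via Proposition~\ref{prop:odd-orth}, and then observe that the sum over $\alpha$ factors through the product to give the signed binomials $[L_{\lambda_i-i+w(i)}]-[L_{\lambda_i+2k+1-i-w(i)}]$, whose antisymmetrized sum over $\fS_k$ is the stated determinant. The only cosmetic difference is that the paper carries the $[\Sym(E)]$ factor inside the sum before pulling it out, while you extract it immediately; the bookkeeping with the half-integer $\rho$ is handled correctly in both.
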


\begin{proof}
Every element of the Weyl group can be factored as $\alpha w$ where $\alpha \in \{\pm 1\}^k$ and $w \in \fS_k$; since negation in the last entry is a Coxeter generator and all negations in a single entry are conjugate, we get $\ell(\alpha w) = |\alpha| + \ell(w) \pmod 2$ where $|\alpha|$ is the number of $-1$ in $\alpha$. So the Euler characteristic of this complex becomes
\begin{align*}
  &  \sum_{\alpha\in \{\pm 1\}^k} (-1)^{|\alpha|} \sum_{w \in \fS_k} \sgn(w) [\Sym(E)] [L_{\lambda_1 + k -\frac12 - \alpha_1 (k + \frac12 - w(1))}] \cdots [ L_{\lambda_k + \frac12 - \alpha_k (k + \frac12 - w(k))}] \\
  =& [\Sym(E)] \sum_{w \in \fS_k} \sgn(w) ([L_{\lambda_1 - 1 + w(1)}] - [L_{\lambda_1 + 2k - w(1)}]) \cdots ([L_{\lambda_k - k + w(k)}] - [L_{\lambda_k + k + 1 - w(k)}])\\
  =& [\Sym(E)] \det ([L_{\lambda_i-i+j}] - [L_{\lambda_i + 2k + 1 - i - j}])_{i,j=1,\dots, k}. \qedhere
\end{align*}
\end{proof}

As in the previous case with $m$ even, when $\lambda=0$, we get the following special case of the previous result:
\[
  \sum_{\substack{\mu\\ \ell(\mu) \le m}} (s_{(1+2\mu_1,\dots,1+2\mu_{m})} + s_{2\mu}) = [\Sym(E)] \det([L_{j-i}] + [L_{2k+1-i-j}])_{i,j=1,\dots,k}.
\]

Since $m$ is odd, the element $-1$ is in the center of $\bO(U)$ and hence acts on the complex $\bF_\bullet$ and each $V[\lambda]$, so we can further refine it by taking isotypic components of the corresponding $\bZ/2$-action. Let $V[\lambda]^+$ denote the space of invariants under $-1$ and $V[\lambda]^-$ denote the space of skew-invariants. Define
\begin{align*}
  M_0 &= \bigoplus_{d \ge 0} \Sym^{2d} E, \qquad  M_1 = \bigoplus_{d \ge 0} \Sym^{2d+1}E,
\end{align*}
where the indices $0,1$ are to be thought of as elements of $\bZ/2$.

\begin{proposition}
  The character of $V[\lambda]^+$ is given by
  \begin{align*}
    \sum_{\alpha\in \{\pm 1\}^k} (-1)^{|\alpha|} \sum_{w \in \fS_k} \sgn(w) [M_{|\lambda|+|\alpha|}] [L_{\lambda_1 + k -\frac12 - \alpha_1 (k + \frac12 - w(1))}] \cdots [ L_{\lambda_k + \frac12 - \alpha_k (k + \frac12 - w(k))}],
  \end{align*}
  and the character of $V[\lambda]^-$ is given by
  \[
    \sum_{\alpha\in \{\pm 1\}^k} (-1)^{|\alpha|} \sum_{w \in \fS_k} \sgn(w) [M_{|\lambda|+|\alpha|+1}] [L_{\lambda_1 + k -\frac12 - \alpha_1 (k + \frac12 - w(1))}] \cdots [ L_{\lambda_k + \frac12 - \alpha_k (k + \frac12 - w(k))}].
  \]
\end{proposition}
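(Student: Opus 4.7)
The strategy is to equivariantly split the Zelevinsky complex $\bF^\lambda_\bullet$ under the action of the central element $-1 \in \bO(U)$, which is central precisely because $m=2k+1$ is odd. This element commutes with both $\fso(U)$ and $H=\fgl(E)$, hence acts on every term of $\bF^\lambda_\bullet$, preserves the maps, and decomposes each term into $\pm1$-eigenspaces. The two resulting subcomplexes then resolve $V[\lambda]^+$ and $V[\lambda]^-$ respectively, and the claimed formulas arise as their equivariant Euler characteristics.

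First I would identify the action of $-1$ on the weight spaces from Proposition~\ref{prop:odd-orth}. Since $-1$ acts by scalar $-1$ on $U$, it acts on $\Sym^r(E\otimes U)$ by $(-1)^r$. Under the isomorphism $V \cong \Sym(E)\otimes\bigotimes_i \Sym(E\otimes(U_i\oplus U_i^*))$, the subspace $\Sym^d(E)$ contributes a factor $(-1)^d$, and each piece of $L_n$ lies in $\Sym^{a+b}(E\otimes(U_i\oplus U_i^*))$ for some $(a,b)$ with $a-b=n$, hence with $a+b\equiv n\pmod 2$; so $-1$ acts uniformly on $L_n$ by $(-1)^n$. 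Consequently on $\Sym^d(E)\otimes L_{\chi_1}\otimes\cdots\otimes L_{\chi_k}$ the element $-1$ acts by $(-1)^{d+\chi_1+\cdots+\chi_k}$.

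Next I would compute the parity $\sum_i \chi_i$ at a term indexed by $w=\alpha w'$, with $\alpha\in\{\pm1\}^k$ and $w'\in\fS_k$, where $\chi_i = \lambda_i + (k+\tfrac12-i) - \alpha_i(k+\tfrac12-w'(i))$. A direct substitution gives $\chi_i \equiv \lambda_i + i + w'(i) \pmod 2$ when $\alpha_i=+1$, and $\chi_i \equiv \lambda_i + i + w'(i) + 1 \pmod 2$ when $\alpha_i=-1$. Summing, and using $\sum_i(i+w'(i)) = 2\sum_i i \equiv 0$, yields $\sum_i \chi_i \equiv |\lambda| + |\alpha|\pmod 2$. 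Therefore
\[
 V_\chi^+ = M_{|\lambda|+|\alpha|}\otimes L_{\chi_1}\otimes\cdots\otimes L_{\chi_k}, \qquad V_\chi^- = M_{|\lambda|+|\alpha|+1}\otimes L_{\chi_1}\otimes\cdots\otimes L_{\chi_k},
\]
where the subscripts on $M$ are read modulo $2$. Substituting into the equivariant Euler characteristic in the same way as in the proof of the preceding proposition, and using $(-1)^{\ell(\alpha w')} = (-1)^{|\alpha|+\ell(w')}$, delivers both claimed formulas.

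The main obstacle is purely bookkeeping: one must verify that $-1$ acts homogeneously on each $L_n$ (so that the $\bZ/2$-grading by $-1$-eigenspaces is genuinely compatible with the weight-space factorization of $V_\chi$), and then track the parity of $\sum_i\chi_i$ through the factorization $W(B_k)=\{\pm1\}^k\rtimes\fS_k$ without double counting. No new structural ingredient beyond Proposition~\ref{prop:odd-orth} and Zelevinsky's resolution is needed; the result is simply the $\bZ/2$-graded refinement of the preceding odd-case proposition.
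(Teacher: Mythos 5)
Your proposal is correct and follows essentially the same argument as the paper: use the central element $-1\in\bO(U)$ to split the Zelevinsky complex into $\pm 1$-eigenspaces, compute that $-1$ acts on $L_n$ by $(-1)^n$ and on the $\Sym^d(E)$ factor by $(-1)^d$, and verify the parity identity $|\lambda+\rho-\alpha w(\rho)|\equiv|\lambda|+|\alpha|\pmod 2$. The only difference is that you write out the parity bookkeeping coordinate-by-coordinate where the paper states it as a single congruence; the content is identical.
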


We unfortunately could not find a compact determinantal expression for the above sums.

\begin{proof}
The element $-1$ acts on $\Sym^d(E)$ by $(-1)^d$, so it acts on $L_n$ by $(-1)^n$ and hence
\begin{align*}
  V_\chi^+ = \begin{cases} M_0 \otimes L_{\chi_1} \otimes \cdots \otimes L_{\chi_k} & \text{if $|\chi|=\chi_1+\cdots+\chi_k$ is even}\\
    M_1 \otimes L_{\chi_1} \otimes \cdots \otimes L_{\chi_k} & \text{otherwise} \end{cases},
\end{align*}
and the opposite holds for $V_\chi^-$. Also, $|\lambda+\rho-\alpha w(\rho)| \equiv |\lambda| + |\alpha| \pmod 2$ for any $\alpha \in \{\pm 1\}^k$ and $w \in \fS_k$, so the result follows.
\end{proof}

\begin{example}
  For $\dim U = 3$, Zelevinsky's theorem gives a complex
\[
  0 \to \Sym E \otimes L_{-1} \to \Sym E \otimes L_0
\]
which ``resolves'' the double cover of the coordinate ring of rank $\le 3$ symmetric matrices. Taking invariants under $\bZ/2$ gives the resolution for the coordinate ring itself:
\[
  0 \to M_1 \otimes L_{-1} \to M_0 \otimes L_0. \qedhere
\]
\end{example}

\begin{remark}
For the pushforward construction, see \cite[\S 4.7]{lwood}.
\end{remark}

\section{``Non-commutative'' matrices via spinors} \label{sec:spinor}

Let $E$ be a finite-dimensional vector space with $\dim(E)=e$ and let $U$ be an $m$-dimensional orthogonal space, let $\Delta$ be the spinor representation (this is irreducible if $m$ is odd and is the direct sum of both half-spinor representations if $m$ is even) and set
\[
  V = \Sym(E \otimes U) \otimes \Delta.
\]
We set $\fg = \fso(U)$. There is a commuting action of $H = \fgl(E)$.

\begin{remark}
There is a natural symplectic form on $E \oplus E^*$ and a symmetric form on $\bC$, and there is a commuting action of the orthosymplectic Lie superalgebra $H' = \fosp(\bC | E \oplus E^*)$ so that $V$ is a direct sum of irreducible ${\bf Pin}(U) \times H'$ representations where ${\bf Pin}(U)$ denotes the natural double cover of the orthogonal group $\bO(U)$.
\end{remark}

The $\Delta$-covariants for the action of ${\bf Pin}(U)$ is the quotient of $\Sym(E) \otimes \bigwedge(\Sym^2 E)$ which is like a non-commutative version of the coordinate ring of a determinantal variety. This particular vector space appears because it is the underlying space of the universal enveloping algebra of the free 2-step nilpotent Lie superalgebra $E \oplus \Sym^2E$ generated by $E$, see \cite[\S 4.1]{spincat}. More specifically,
\[
   \Sym(E) \otimes \bigwedge(\Sym^2 E) \cong\bigoplus_\lambda \bS_\lambda(E)
\]
and we take the quotient by all $\bS_\lambda(E)$ with $\ell(\lambda)>m$. 

Again define
\[
  L_n = \bigoplus_{d \ge 0} \Sym^d(E) \otimes \Sym^{d+n}(E).
\]

We will treat the cases of $m$ odd and $m$ even separately.

Let $I_k$ be the set of 0-1 vectors of length $k$. Recall that the set of weights for $\Delta$ are $v-(\frac12, \dots, \frac12)$ for $v\in I_k$ if $k=\lfloor m/2 \rfloor$.

\subsection{Odd case}

Suppose $m=2k+1$ is odd. Then
\[
  \rho = (k-\frac12,k-\frac32,\dots,\frac12).
\]

\begin{proposition}
\[
  V_\chi = \Sym(E) \otimes \bigoplus_{v \in I_k} L_{\chi_1+v_1-\frac12} \otimes \cdots \otimes L_{\chi_k+v_k-\frac12}.
\]
\end{proposition}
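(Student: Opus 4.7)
The plan is to mimic the proof of Proposition~\ref{prop:odd-orth}, but with the extra tensor factor $\Delta$ handled by its explicit weight space decomposition. The key observation is that because $\Delta$ is a finite-dimensional representation whose weights are all of multiplicity one, taking a tensor product with $\Delta$ affects a weight space decomposition by a straightforward summation over the weights of $\Delta$.

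First I would reuse the geometric decomposition $U = \bC \oplus \bigoplus_{i=1}^k (U_i \oplus U_i^*)$ from the proof of Proposition~\ref{prop:odd-orth}, where $U_i$ and $U_i^*$ are one-dimensional weight spaces of weights $\pm e_i$. This immediately yields
\[
\Sym(E \otimes U)_\eta = \Sym(E) \otimes L_{\eta_1} \otimes \cdots \otimes L_{\eta_k}
\]
for every weight $\eta = (\eta_1,\dots,\eta_k)$ of the Cartan of $\fso(U)$.

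Next I would use the standard fact (recalled just before the statement) that the weights of $\Delta$ for $m = 2k+1$ are exactly $\{v - (\tfrac12,\dots,\tfrac12) : v \in I_k\}$, each occurring with multiplicity $1$. Since tensoring with $\Delta$ on a weight space decomposition gives
\[
V_\chi = \bigoplus_{v \in I_k} \Sym(E \otimes U)_{\chi - v + (\frac12,\dots,\frac12)} \otimes \Delta_{v - (\frac12,\dots,\frac12)},
\]
combining with the previous step produces
\[
V_\chi = \Sym(E) \otimes \bigoplus_{v \in I_k} L_{\chi_1 - v_1 + \frac12} \otimes \cdots \otimes L_{\chi_k - v_k + \frac12}.
\]

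Finally I would reindex the sum via the involution $v \mapsto \mathbf{1} - v$ on $I_k$, which converts the exponent $\chi_i - v_i + \tfrac12$ into $\chi_i + v_i - \tfrac12$ and matches the form in the statement. There is no real obstacle in this argument; the only subtlety is the convention for parametrizing the weights of $\Delta$, which is handled by the final reindexing.
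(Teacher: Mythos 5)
Your proof is correct and follows essentially the same route as the paper, which simply says the argument is that of Proposition~\ref{prop:odd-orth} augmented by the weight space decomposition of $\Delta$. The one thing you spell out that the paper leaves implicit is the reindexing $v \mapsto \mathbf{1}-v$ on $I_k$; this is a harmless bookkeeping step (and in fact could also be bypassed by the observation $L_n = L_{-n}$), so your version is just a more explicit rendering of the same argument.
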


\begin{proof}
  This is similar to the proof of Proposition~\ref{prop:odd-orth} except we need to also take into account the weight space decomposition of $\Delta$.
\end{proof}

\begin{proposition}
  If $\lambda- (\frac12, \dots,\frac12)$ is a partition, then the character of $V[\lambda]$ is
  \[
    [\Sym(E)] \det ([L_{\lambda_{k+1-i} - \frac12 - i + j}] - [L_{\lambda_{k+1-i} - \frac12 + i + j}])_{i,j =1,\dots, k}.
  \]
\end{proposition}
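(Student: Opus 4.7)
The plan is to apply Zelevinsky's theorem directly to $V = \Sym(E\otimes U) \otimes \Delta$; the Euler characteristic of the resulting resolution computes $[V[\lambda]]$, and the main work is to rewrite it as the claimed determinant. The calculation proceeds in parallel with the odd orthogonal case of Section~\ref{sec:sym}, with the spinor factor contributing the extra sum over $v \in I_k$ coming from the previous proposition.

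I would begin by factoring the Weyl group $W(B_k) = \{\pm 1\}^k \rtimes \fS_k$ and writing each element as $\alpha w$, using the mod-$2$ length formula $\ell(\alpha w) \equiv |\alpha| + \ell(w) \pmod 2$ as in the proofs of Sections~\ref{sec:skew-sym} and~\ref{sec:sym}. Combined with $\rho = (k-\tfrac12,\dots,\tfrac12)$ and the preceding weight-space formula, the Euler characteristic becomes
\[
[\Sym(E)] \sum_{\alpha, w, v} (-1)^{|\alpha|} \sgn(w) \prod_{i=1}^k \bigl[L_{\lambda_i + \rho_i - \alpha_i \rho_{w(i)} + v_i - 1/2}\bigr].
\]
Performing the inner sums over $(\alpha_i, v_i) \in \{\pm 1\}\times\{0,1\}$ for each $i$ produces a four-term combination of $[L_\bullet]$'s depending on $w(i)$, and the outer alternating sum over $\fS_k$ packages this as a $k\times k$ determinant $[\Sym(E)]\det(f_i(j))_{i,j=1,\dots,k}$ via Laplace expansion.

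The main obstacle is collapsing this four-term determinant to the two-term determinant of the statement. The key observation is that the four-term entry decomposes as $f_i(j) = g_i(j) + g_i(j+1)$, where $g_i(j) = [L_{\lambda_i - 1/2 - i + j}] - [L_{\lambda_i + 2k + 3/2 - i - j}]$, and that $g_i(k+1)$ vanishes identically since both of its subscripts equal $\lambda_i + k + \tfrac12 - i$. Thus the four-term matrix factors as a product $G \cdot M$, with $G$ the $k\times (k+1)$ matrix of $g_i(j)$'s (whose last column is zero) and $M$ a $(k+1)\times k$ matrix with $1$'s on the diagonal and subdiagonal (whose $k\times k$ minors are all equal to $+1$). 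By the Cauchy--Binet formula, the vanishing last column of $G$ kills all but one term of the expansion, leaving $\det F = \det(g_i(j))_{i,j=1,\dots,k}$.

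Finally, a reindexing $i \mapsto k+1-i$ and $j\mapsto k+1-j$ (whose signs cancel), combined with the symmetry $[L_n] = [L_{-n}]$ of the $\fgl(E)$-character—immediate from the substitution $d\mapsto d+n$ in $L_n = \bigoplus_d \Sym^d E \otimes \Sym^{d+n} E$—brings the two-term determinant into the form stated in the theorem. The bookkeeping of the reindexing, and in particular verifying that the $\pm$ signs on $i$ and $j$ in the subscripts match the asserted formula, is the most delicate part.
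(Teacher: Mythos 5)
Your proposal follows the paper's proof essentially step for step: same factorization of $W(B_k)$ with the $\bmod\,2$ length formula, same four-term expansion of the determinant entries after summing over $(\alpha_i,v_i)$, and the same key vanishing $g_i(k+1)=0$ that allows the four-term entry $f_i(j)=g_i(j)+g_i(j+1)$ to collapse to a two-term entry. The only stylistic difference is that you implement the collapse via a factorization $F=GM$ and Cauchy--Binet, while the paper performs the equivalent successive column operations (subtracting column $j+1$ from column $j$, working downward in $j$); the two are interchangeable, and your formulation makes the role of the vanishing last column of $G$ particularly transparent.

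One caveat about the last step, which you correctly identify as delicate but do not actually carry out: after the reindexing $i\mapsto k+1-i$, $j\mapsto k+1-j$, the first subscript comes out as $\lambda_{k+1-i}-\tfrac12+i-j$, not $\lambda_{k+1-i}-\tfrac12-i+j$. The symmetry $[L_n]=[L_{-n}]$ alone does not bridge this gap, since it negates the entire subscript $\lambda_{k+1-i}-\tfrac12+i-j$ rather than just the $i-j$ part, and the two expressions only agree when $\lambda_{k+1-i}=\tfrac12$. This same discrepancy appears in the paper's own final displayed formula (and is harmless there, since the paper's main application takes $\lambda=(\tfrac12,\dots,\tfrac12)$, where $[L_{j-i}]=[L_{i-j}]$); but if you carry out the bookkeeping in full for general $\lambda$ you should expect to land on $[L_{\lambda_{k+1-i}-1/2+i-j}]-[L_{\lambda_{k+1-i}-1/2+i+j}]$ rather than on the literal statement.
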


\begin{proof}
  Every element of the Weyl group can be factored as $\alpha w$ where $\alpha \in \{\pm 1\}^k$ and $w \in \fS_k$; since negation in the last entry is a Coxeter generator and all negations in a single entry are conjugate, we get $\ell(\alpha w) = |\alpha| + \ell(w) \pmod 2$ where $|\alpha|$ is the number of $-1$ in $\alpha$. So the Euler characteristic of this complex becomes
\begin{align*}
  &  \sum_{\alpha \in \{\pm 1\}^k} (-1)^{|\alpha|} \sum_{w \in \fS_k} \sgn(w) [\Sym(E)] \sum_{x \in I_k} [L_{\lambda_1 + k - 1 - \alpha_1 (k + \frac12 - w(1)) + x_1}] \cdots [ L_{\lambda_k - \alpha_k (k + \frac12 - w(k)) + x_k}] \\
  =& [\Sym(E)] \sum_{w \in \fS_k} \sgn(w) \prod_{i=1}^k ([L_{\lambda_i - i - \frac12 + w(i)}] - [L_{\lambda_i + 2k-i + \frac12 - w(i)}] + [L_{\lambda_i - i + \frac12 + w(i)}] - [L_{\lambda_i + 2k-i + \frac32 - w(i)}])\\
  =& [\Sym(E)] \det ([L_{\lambda_i - i - \frac12 + j}] - [L_{\lambda_i + 2k - i + \frac12 - j}] + [L_{\lambda_i - i + \frac12 + j}] - [L_{\lambda_i + 2k-i + \frac32 - j}])_{i,j =1,\dots, k}. 
\end{align*}
We can further simplify as follows. First note that if $j=k$, then the middle two terms cancel. In general, the inner two terms for any $j<k$ match the outer two terms for $j+1$, so subtract column $j+1$ from column $j$ (starting with $j=k-1$ and decreasing the index). Then we get
\begin{align*}
   [\Sym(E)] \det ([L_{\lambda_i - i - \frac12 + j}] - [L_{\lambda_i + 2k-i + \frac32 - j}])_{i,j =1,\dots, k}.
\end{align*}
Now do the change of indices $i\mapsto k+1-i$ and $j \mapsto k+1-i$ to get
\[
    [\Sym(E)] \det ([L_{\lambda_{k+1-i} - \frac12 - i + j}] - [L_{\lambda_{k+1-i} - \frac12 + i + j}])_{i,j =1,\dots, k}. \qedhere
\]
\end{proof}

\begin{remark}
  For the pushforward construction, see \cite[\S 6.3]{spincat}.
\end{remark}

When $\lambda=(\frac12, \dots, \frac12)$, then $V[\lambda] = \bigoplus_{\ell(\lambda) \le m} \bS_\lambda(E)$ (see \cite[Proposition 4.1]{spincat}), and its character is given by the determinant
\[
  [\Sym(E)] \det([L_{j-i}] - [L_{i+j}])_{i,j =1,\dots, k}.
\]
This is the formula given in \cite[Theorem 14, Equation 22]{gessel} once we note that $L_n=L_{-n}$.

\subsection{Even case}

Now suppose that $m=2k$ is even. Then
\[
  \rho = (k-1,k-2,\dots,0).
\]

\begin{proposition}
\[
  V_\chi =\bigoplus_{v \in I_k} L_{\chi_1 + v_1 - \frac12} \otimes \cdots \otimes L_{\chi_k + v_k - \frac12}.
\]
\end{proposition}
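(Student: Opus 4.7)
The plan is to mirror the proof of Proposition~\ref{prop:even-orth}, with one extra ingredient: the weight space decomposition of the spinor representation $\Delta$.

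First I would choose a hyperbolic decomposition $U = (U_1 \oplus U_1^*) \oplus \cdots \oplus (U_k \oplus U_k^*)$ where each $U_i$ is a 1-dimensional isotropic subspace on which the $i$-th coordinate of the Cartan acts by $+1$ and $U_i^*$ by $-1$. Exactly as in Proposition~\ref{prop:even-orth}, this gives
\[
  \Sym(E \otimes U) \cong \bigotimes_{i=1}^k \Sym(E \otimes (U_i \oplus U_i^*)),
\]
and the $n$-weight space (with respect to the $i$-th coordinate) of the $i$-th factor is precisely $L_n$. So the $\mu$-weight space of $\Sym(E\otimes U)$ is $L_{\mu_1}\otimes\cdots\otimes L_{\mu_k}$.

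Next I would invoke the sentence recalled just before the statement: the weights of $\Delta$ are exactly $v - (\tfrac12, \dots, \tfrac12)$ for $v \in I_k$, each appearing with multiplicity one (so each $\Delta_{v - (1/2,\ldots,1/2)}$ is 1-dimensional). Therefore, taking the tensor product, we obtain
\[
  V_\chi = \bigoplus_{v \in I_k} \Sym(E \otimes U)_{\chi - v + (\frac12,\ldots,\frac12)} \otimes \Delta_{v - (\frac12,\ldots,\frac12)} = \bigoplus_{v \in I_k} L_{\chi_1 - v_1 + \frac12} \otimes \cdots \otimes L_{\chi_k - v_k + \frac12}.
\]
To match the statement, reindex by $v \mapsto (1,\dots,1) - v$, which is a bijection $I_k \to I_k$; under this substitution the exponent $\chi_i - v_i + \tfrac12$ becomes $\chi_i + v_i - \tfrac12$, giving the claimed formula.

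There is no real obstacle: the argument is a direct bookkeeping exercise combining the hyperbolic decomposition already used in Proposition~\ref{prop:even-orth} with the (well-known) description of the weights of the full even spinor representation. The only mildly delicate point is the cosmetic reindexing $v \mapsto (1,\dots,1)-v$ needed to align the sign convention with the statement, but this is legitimate because the set of weights of $\Delta$ is invariant under negation on an even number of coordinates (and, more strongly here, under total negation, since both half-spinors appear).
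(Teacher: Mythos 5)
Your argument is correct and matches the paper's own (one-line) proof, which simply refers back to Proposition~\ref{prop:even-orth} and says to take the weight-space decomposition of $\Delta$ into account. The only thing the paper leaves implicit is the cosmetic reindexing $v \mapsto (1,\dots,1)-v$ (equivalently, that the weight set of the full spinor $\Delta = \Delta^+\oplus\Delta^-$ is invariant under total negation), which you correctly spell out.
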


\begin{proof}
  This is similar to the proof of Proposition~\ref{prop:even-orth} except we need to also take into account the weight space decomposition of $\Delta$.  
\end{proof}

\begin{proposition}
  If $\mu = \lambda- (\frac12, \dots,\frac12)$ is a weakly decreasing integer sequence and satisfies $\mu_{k-1} \ge |\mu_k|$, then the character of $V[\lambda]$ is
  \[
    \det ([L_{\lambda_{k+1-i} - i + j - \frac12}] + [L_{\lambda_{k+1-i} + i  + j -  \frac32}])_{i,j =1,\dots, k}.
  \]
\end{proposition}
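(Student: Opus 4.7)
The plan is to adapt the argument of the even symmetric case (Section~5.1) by grafting on the spinor weight decomposition, in parallel with how the odd spinor case extended the odd symmetric case. First I would apply Zelevinsky's theorem to $V = \Sym(E\otimes U)\otimes\Delta$ with $\fg = \fso(U)$, using the preceding proposition to write each weight space as $V_\chi = \bigoplus_{v \in I_k} L_{\chi_1+v_1-\frac12}\otimes\cdots\otimes L_{\chi_k+v_k-\frac12}$. The Weyl group of type $D_k$ factors as $\{\pm 1\}^k_0 \rtimes \fS_k$, and for $w = \alpha\sigma$ one has $\ell(w) \equiv \ell(\sigma)\pmod 2$ (as noted in Section~5.1), so the Zelevinsky sign reduces to $\sgn(\sigma)$.

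Next I would expand the equivariant Euler characteristic as a triple sum over $(\alpha,\sigma,v)\in \{\pm 1\}^k_0\times\fS_k\times I_k$. In order to factor this as a single determinant, I would extend the $\alpha$-sum from $\{\pm 1\}^k_0$ to all of $\{\pm 1\}^k$ at the cost of a $\tfrac12$: this is legitimate because $\rho_k=0$ makes the factor at the position $i_0 := \sigma^{-1}(k)$ independent of $\alpha_{i_0}$, so flipping that sign is a 2-to-1 map $\{\pm 1\}^k \to \{\pm 1\}^k_0$. After the extension, the sums over $\alpha_i$ and $v_i$ factorize across $i$ and each factor becomes a sum of four $[L_\star]$ terms; this realizes the Euler characteristic as $\tfrac12\det(M_{ij})$ where $M_{ij} = [L_{\lambda_i-i+j-\frac12}] + [L_{\lambda_i-i+j+\frac12}] + [L_{\lambda_i+2k-i-j-\frac12}] + [L_{\lambda_i+2k-i-j+\frac12}]$.

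To clear the factor of $\tfrac12$, I would group $M_{ij} = C_{ij}+C_{i,j+1}$ with $C_{ij} := [L_{\lambda_i-i+j-\frac12}] + [L_{\lambda_i+2k-i-j+\frac12}]$ and observe the crucial identity $C_{i,k+1}=C_{i,k}$: a direct inspection shows that the two summands of $C_{i,k+1}$ are precisely those of $C_{i,k}$ written in the opposite order (no appeal to $L_n=L_{-n}$ is needed). Hence $M_{\cdot,k} = 2 C_{\cdot,k}$ while $M_{\cdot,j} = C_{\cdot,j}+C_{\cdot,j+1}$ for $j<k$, so the column operations $\mathrm{col}_{k-1}\leftarrow\mathrm{col}_{k-1}-\tfrac12\mathrm{col}_k$, then iteratively $\mathrm{col}_j\leftarrow\mathrm{col}_j-\mathrm{col}_{j+1}$ for $j=k-2,\dots,1$, telescope to give $\det(M) = 2\det(C)$. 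Finally the index substitution $i\mapsto k+1-i$, $j\mapsto k+1-j$ (a composition of two reversals, overall sign $+1$) converts $\det(C_{ij})$ into the stated form.

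The main technical hurdle is establishing $C_{i,k+1}=C_{i,k}$ and carrying out the ensuing telescoping; this is the type-$D$ analog of the middle-term cancellation at $j=k$ used in both the odd spinor and the odd symmetric cases, and it is what reconciles the parity constraint defining $\{\pm 1\}^k_0$ with a clean single determinantal expression.
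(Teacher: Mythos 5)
Your proof takes essentially the same route as the paper: reduce the $D_k$ Weyl sum modulo the even-sign constraint to a full $\{\pm1\}^k$-sum at the cost of a $\tfrac12$, factor over $i$ to get a $4$-term matrix $M$, observe the coincidence at $j=k$, and perform column operations. Your decomposition $M_{ij}=C_{ij}+C_{i,j+1}$ together with the identity $C_{i,k}=C_{i,k+1}$ (which, as you note, holds by inspection of the two summands, without invoking $L_n=L_{-n}$) and the explicit telescoping column operations $\mathrm{col}_{k-1}\leftarrow\mathrm{col}_{k-1}-\tfrac12\mathrm{col}_k$, $\mathrm{col}_j\leftarrow\mathrm{col}_j-\mathrm{col}_{j+1}$ is a slightly more careful rendering of the paper's ``pull out a factor of $2$ from column $k$, then do column operations''; both lead to $\det M=2\det C$.

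One small caveat on the final line, which you share with the paper: the reindexing $i\mapsto k+1-i$, $j\mapsto k+1-j$ applied to $C_{ij}=[L_{\lambda_i-i+j-\frac12}]+[L_{\lambda_i-i-j+2k+\frac12}]$ actually produces first exponent $\lambda_{k+1-i}+i-j-\tfrac12$ (note $+i-j$), not the stated $\lambda_{k+1-i}-i+j-\tfrac12$. These two expressions are not equal for generic dominant $\lambda$ (e.g.\ $k=2$, $\mu=(2,1)$ already distinguishes them), although they do coincide when $\lambda=(\tfrac12,\dots,\tfrac12)$, which is the case used to recover Gessel's formula. So the sign in the published statement appears to be a typo; your proof, like the paper's, in fact establishes the $+i-j$ version. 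You might want to record the corrected exponent explicitly rather than asserting that the substitution ``converts $\det(C_{ij})$ into the stated form.''
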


\begin{proof}
  Let $\{\pm 1\}^k_0$ be the subgroup of $\{\pm 1\}^k$ with an even number of $-1$. Every element of the Weyl group can be factored as $\alpha w$ where $\alpha \in \{\pm 1\}^k_0$ and $w \in \fS_k$; since negating the last 2 entries and swapping them is a Coxeter generator, we get $\ell(\alpha w) = \ell(w) \pmod 2$. So the Euler characteristic of this complex becomes
\begin{align*}
  &  \sum_{\alpha \in \{\pm 1\}^k_0} \sum_{w \in \fS_k} \sgn(w) \sum_{x \in I_k} [L_{\lambda_1 + k - 1 - \alpha_1 (k - w(1)) + x_1 - \frac12}] \cdots [ L_{\lambda_k - \alpha_k (k - w(k)) + x_k - \frac12}]\\
  = &\frac12\sum_{\alpha \in \{\pm 1\}^k} \sum_{w \in \fS_k} \sgn(w) \sum_{x \in I_k} [L_{\lambda_1 + k - 1 - \alpha_1 (k - w(1)) + x_1 - \frac12}] \cdots [ L_{\lambda_k - \alpha_k (k - w(k)) + x_k - \frac12}]\\
  =& \frac12 \sum_{\sigma \in \fS_k} \sgn(\sigma) \prod_{i=1}^k ([L_{\lambda_i - i + w(i) - \frac12}]
+ [L_{\lambda_i + 2k - i - w(i)- \frac12}] + [L_{\lambda_i - i + w(i) + \frac12}] + [L_{\lambda_i + 2k - i - w(i) + \frac12}])\\
  =&  \frac12 \det ([L_{\lambda_i - i + j - \frac12}] + [L_{\lambda_i -i - j + 2k-\frac12}] + [L_{\lambda_i - i + j + \frac12}] + [L_{\lambda_i - i -j + 2k+ \frac12}])_{i,j =1,\dots, k}
\end{align*}
where the first equality follows from the fact that if $\sigma(i)=k$, then either choice of $\alpha_i \in \{\pm 1\}$ gives the same result. We can simplify this determinant further. If $j=k$, then the first two terms agree and the last two terms agree, so we can pull out a factor of 2. For $j<k$, the inner two terms match the outer two terms for $j+1$, so we can do column operations to get the following result:
\[
  \det ([L_{\lambda_i - i + j - \frac12}] + [L_{\lambda_i - i -j + 2k+ \frac12}])_{i,j =1,\dots, k}.
\]
Finally, do the change of indices $i\mapsto k+1-i$ and $j\mapsto k+1-j$ to get
\[
  \det ([L_{\lambda_{k+1-i} - i + j - \frac12}] + [L_{\lambda_{k+1-i} + i  + j -  \frac32}])_{i,j =1,\dots, k}. \qedhere
\]
\end{proof}

\begin{remark}
  For the pushforward construction, see \cite[\S 6.2]{spincat}.
\end{remark}

If we take the $\Delta$-coinvariants of $V$ (as a representation of ${\bf Pin}(U)$), we get $\bigoplus_{\ell(\lambda) \le m} \bS_\lambda(E)$ (see \cite[Proposition 4.1]{spincat}). Since we're working with the Lie algebra $\fso(U)$, we can instead take the sum of the coinvariants for the two weights $\lambda^{\pm} =(\frac12, \dots, \frac12, \pm \frac12)$ to get twice the desired representation (since $\dim \End_{\fso(U)}(\Delta)=2$). In particular, we get the following formula for the character of $\bigoplus_{\ell(\lambda) \le 2k} \bS_\lambda(E)$:
\[
  \det ([L_{j- i}] + [L_{i  + j - 1}])_{i,j =1,\dots, k}.
\]
Again, this agrees with \cite[Theorem 14]{gessel}.

\end{document}